\DeclareMathOperator{\Lip}{Lip}                             
\DeclareMathOperator{\lip}{lip}                             
\newcommand{\N}{\mathbb{N}}             
\newcommand{\R}{\mathbb{R}}             
\newcommand{\sequence}[1]{(#1)_{n}}
\newcommand{\iten}{\ensuremath{\widehat{\otimes}_\varepsilon}} 
\newcommand{\pten}{\ensuremath{\widehat{\otimes}_\pi}}         
\newcommand{\F}{\mathcal{F}}                                
\newcommand{\Free}{\mathcal{F}}                             
\newcommand{\la}{\langle}
\newcommand{\ra}{\rangle}
\newcommand{\ep}{\varepsilon}
\theoremstyle{plain}
\newtheorem{theorem}{Theorem}[section]
\newtheorem{lemma}[theorem]{Lemma}
\newtheorem{corollary}[theorem]{Corollary}
\newtheorem{proposition}[theorem]{Proposition}
\theoremstyle{definition}
\newtheorem*{definition*}{Definition}
\newtheorem{definition}[theorem]{Definition}
\theoremstyle{remark}
\begin{document}
\title[The set of elementary tensors is $w$-closed in proj. tensor products]{The set of elementary tensors is weakly closed in projective tensor products.}

\subjclass[2020]{Primary 46A32, 46B28; Secondary 46B10}




\keywords{Projective tensor products, elementary tensor, weak convergence}

\author[C. Petitjean]{Colin Petitjean}
\address[C. Petitjean]{Univ Gustave Eiffel, Univ Paris Est Creteil, CNRS, LAMA UMR8050, F-77447 Marne-la-Vallée, France}
\email{colin.petitjean@univ-eiffel.fr}

\begin{abstract}
    In this short note, we prove that the set of elementary tensors is weakly closed in the projective tensor product of two Banach spaces. As a result, we are able to answer a question from the literature proving that if $(x_n) \subset X$ and $(y_n) \subset Y$ are two weakly null sequences such that $(x_n \otimes y_n)$ converges weakly in $X \pten Y$, then $(x_n \otimes y_n)$ is also weakly null. 
\end{abstract}

\maketitle

\section{Weak convergence in projective tensor product}

Let $X$, $Y$ and $Z$ be real Banach spaces. We denote $\mathcal{B}(X \times Y , Z)$ the space of continuous bilinear operators from $X \times Y$ into $Z$. If $Z = \R$, we simply write $\mathcal{B}(X \times Y)$.
For $x \in X$ and $y \in Y$, define the \textit{elementary tensor} $x\otimes y \in \mathcal{B}(X \times Y )^*$ by: 
$$  \forall B \in \mathcal{B}(X \times Y), \quad  \langle x\otimes y , B \rangle = B(x,y).$$
We then introduce $X \otimes Y := \mathrm{span} \{ x\otimes y \, : \, x \in X, \, y \in Y\}$. Recall that the norm on $\mathcal{B}(X \times Y)$ is defined by $\| B \|_{\mathcal{B}(X \times Y)} = \sup_{x \in B_X, y \in B_Y} |B(x,y)|$. Let $\|\cdot\|_{\pi}$ be the dual norm of $\| \cdot\|_{\mathcal{B}(X \times Y)}$. It is well known (see e.g. \cite[Proposition~VIII.~9.~a)]{vectormeasures}) that if $u \in X \otimes Y$ then $$ \| u \|_{\pi} = \inf \Big \{ \sum_{i=1}^n \|x_i\|  \|y_i \| \, : \, u = \sum_{i=1}^n x_i \otimes y_i \Big \}.$$
The \textit{projective tensor product} of $X$ and $Y$ is defined as follows:
$$X \pten Y = \overline{\mathrm{span}}^{\|\cdot\|_{\pi}} \{ x\otimes y \, : \, x \in X, \, y \in Y\} \subseteq \mathcal{B}(X \times Y)^* .$$
As a consequence of the fundamental linearisation property of tensor products, one easily deduce the following isometric identification $(X \pten Y)^* \equiv \mathcal{B}(X \times Y)$.
Since $\mathcal{B}(X \times Y) \equiv \mathcal{L}(X, Y^*)$, where $\mathcal{L}(X, Y^*)$ stands for the space of bounded linear operators from $X$ to $Y^*$, one also has that $\mathcal{L}(X, Y^*) \equiv (X \pten Y)^*$.
\medskip

The aim of this short note is to answer Question~3.9 in \cite{RR_23}:
\begin{center}
\textit{Let $X$ and $Y$ be Banach spaces. Let $(x_n)_{n\in \N}$ and $(y_n)_{n\in \N}$ be weakly null sequences in $X$ and $Y$, respectively, such that $(x_n \otimes y_n)_{n \in \N}$ is weakly convergent in $X \pten Y$. Is $(x_n \otimes y_n)_{n \in \N}$ weakly null in $X\pten Y$?}
\end{center}
Let 
\begin{equation} \label{eqT}
    \mathcal T= \{ x \otimes y \, : \, x \in X, \, y \in Y \}     
\end{equation}
be the set of elementary tensors in $X \pten Y$. We shall start with a simple but key observation. Recall that a Banach space $X$ has the approximation property (AP in short) if for every $\ep >0$, for every compact subset $K \subset X$, there exists a finite rank operator $T\in \mathcal L(X,X)$ such that $\|Tx -x\| \leq \ep$ for every $x \in K$.

\begin{lemma} \label{lem:matrix}
    Let $X,Y$ be Banach spaces such that $X$ or $Y$ has the AP. Let $T \in X \pten Y$. Then $T \in \mathcal T$  if and only if for every  linearly independent families $\{x_1^*,x_2^* \} \subset X^*$ and $\{y_1^{*},y_2^{*} \} \subset Y^{*}$ we have: 
 $$(\star) \qquad \begin{vmatrix}
 \la  T , x_1^* \otimes y_1^{*} \ra  &  \la  T , x_1^* \otimes y_2^{*} \ra  \\ 
 \la  T , x_2^* \otimes y_1^{*} \ra  &  \la  T , x_2^* \otimes y_2^{*} \ra  
 \end{vmatrix} = 0 .$$
\end{lemma}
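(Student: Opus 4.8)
The plan is to prove the trivial implication by hand and then reduce the substantial converse to the classical fact that, under the approximation property, the projective tensor product injects faithfully into a space of operators. First I would treat $T \in \mathcal T \Rightarrow (\star)$, which needs no hypothesis on $X,Y$. Writing $T = x \otimes y$, each entry of the matrix is $\la x \otimes y, x_i^* \otimes y_j^* \ra = x_i^*(x)\, y_j^*(y)$, so the matrix is the outer product of the column vector $(x_1^*(x), x_2^*(x))^\top$ with the row vector $(y_1^*(y), y_2^*(y))$. A rank-one matrix has vanishing determinant, so $(\star)$ holds (here the linear independence of the families plays no role).

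For the converse, I would first record that $(\star)$ is equivalent to the multiplicative identity $\la T, x_1^* \otimes y_1^* \ra \la T, x_2^* \otimes y_2^* \ra = \la T, x_1^* \otimes y_2^* \ra \la T, x_2^* \otimes y_1^* \ra$ for \emph{all} quadruples, not merely the independent ones: if one of the two families is linearly dependent the determinant vanishes automatically by bilinearity, so dropping the independence hypothesis costs nothing. Introducing the bounded bilinear form $\phi(x^*,y^*) := \la T, x^* \otimes y^* \ra$ on $X^* \times Y^*$, this identity expresses precisely that $\phi$ has ``rank one''. If $\phi \equiv 0$ there is nothing to factor; otherwise, fixing $(a^*,b^*)$ with $\phi(a^*,b^*) \neq 0$, the identity yields $\phi(x^*,y^*) = \phi(a^*,b^*)^{-1} \phi(x^*,b^*)\,\phi(a^*,y^*)$, that is $\phi = f \otimes g$ with $f := \phi(\cdot, b^*) \in X^{**}$ and $g := \phi(a^*,b^*)^{-1}\phi(a^*, \cdot) \in Y^{**}$.

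The next point is to upgrade $f$ and $g$ from the biduals to $X$ and $Y$, which is where the concrete nature of $T \in X \pten Y$ enters. Fixing a representation $T = \sum_i x_i \otimes y_i$ with $\sum_i \|x_i\|\,\|y_i\| < \infty$, absolute convergence gives $\phi(x^*, b^*) = x^*\big(\sum_i b^*(y_i) x_i\big)$, so $f$ is the canonical image of $\xi := \sum_i b^*(y_i) x_i \in X$; symmetrically, $g$ is the canonical image of some $\eta \in Y$. Hence the genuine elementary tensor $T_0 := \xi \otimes \eta \in \mathcal T$ satisfies $\la T_0, x^* \otimes y^* \ra = \phi(x^*, y^*) = \la T, x^* \otimes y^* \ra$ for all $x^* \in X^*$ and $y^* \in Y^*$.

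It then remains to pass from equality of $T$ and $T_0$ on all elementary functionals $x^* \otimes y^* \in (X \pten Y)^*$ to equality in $X \pten Y$, and this is exactly the step consuming the approximation property. Since $X$ or $Y$ has the AP, the canonical map $X \pten Y \to \mathcal L(X^*, Y)$ sending $U = \sum_i u_i \otimes v_i$ to $x^* \mapsto \sum_i x^*(u_i) v_i$ is injective (equivalently, the finite-rank operators $X^* \otimes Y^*$ are weak$^*$-dense in $\mathcal L(X, Y^*) = (X \pten Y)^*$). Applying this to $U = T - T_0$, which is sent to $0$ because $\la T - T_0, x^* \otimes y^* \ra = 0$ for all $x^*, y^*$, forces $T = T_0 \in \mathcal T$. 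I expect this injectivity, the classical bridge between the AP and the faithfulness of the operator representation of the projective tensor product, to be the main obstacle and the only place where the hypothesis is genuinely used; everything else is elementary bilinear algebra combined with the absolute convergence of a representation of $T$.
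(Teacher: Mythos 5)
Your proof is correct, and it uses the same two key ingredients as the paper --- the absolutely convergent representation $T=\sum_i x_i\otimes y_i$ and the injectivity, under the AP, of the canonical map $\Phi: X\pten Y\to\mathcal L(X^*,Y)$ --- but it organizes the converse differently. The paper argues by contraposition: if $T\notin\mathcal T$ then $\Phi(T)$ has rank at least $2$, and a suitable quadruple of functionals is chosen to make the determinant nonzero. You argue directly: condition $(\star)$, extended to all quadruples, says that the bilinear form $\phi(x^*,y^*)=\la T,x^*\otimes y^*\ra$ factors as $f\otimes g$ with $f\in X^{**}$, $g\in Y^{**}$, and the series representation shows that $f$ and $g$ are in fact (images of) elements $\xi\in X$ and $\eta\in Y$, after which injectivity of $\Phi$ gives $T=\xi\otimes\eta$. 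Your route has a small advantage in rigor: the paper's step ``$T\notin\mathcal T$ implies $\Phi(T)$ has rank $\ge 2$'' silently requires knowing that a rank-one operator in the range of $\Phi$ comes from a genuine elementary tensor of $X$ and $Y$ (i.e.\ the same descent from the bidual that you carry out explicitly via absolute convergence), whereas your argument makes this step visible. The paper's contrapositive formulation, on the other hand, is shorter and produces the witnessing functionals $\{x_1^*,x_2^*\}$, $\{y_1^*,y_2^*\}$ concretely. Both proofs consume the AP at exactly the same point, namely the injectivity of $\Phi$.
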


\begin{proof}
Thanks to \cite[Proposition 2.8]{ryan}, every $T\in X \pten Y$ can be written as 
$$T = \sum_{n=1}^{\infty} x_n \otimes y_n$$ with $\sum_{n=1}^{\infty} \| x_n \|  \| y_n \| \leq 2 \|T\|$. Moreover, the linear map $\Phi :  X \pten Y \to \mathcal L(X^*,Y)$ obtained by
$$\forall x^* \in X^*, \quad \Phi\Big( \sum_{n=1}^{\infty} x_n \otimes y_n\Big)(x^*) = \sum_{n=1}^{\infty} x^*(x_n) y_n$$ 
defines a bounded operator. Since $X$ or $Y$ has the AP, $\Phi$ is moreover injective; see \cite[Proposition~4.6]{ryan}.

If $T = x \otimes y \in \mathcal T$, then it is straightforward to check that condition $(\star)$ is verified:
 $$ \begin{vmatrix}
 \la  T , x_1^* \otimes y_1^{*} \ra  &  \la  T , x_1^* \otimes y_2^{*} \ra  \\ 
 \la  T , x_2^* \otimes y_1^{*} \ra  &  \la  T , x_2^* \otimes y_2^{*} \ra  
 \end{vmatrix} = \begin{vmatrix}
 x_1^*(x) y_1^{*}(y)   & x_1^*(x) y_2^{*}(y)  \\ 
 x_2^*(x) y_1^{*}(y)  &  x_2^*(x) y_2^{*}(y)
 \end{vmatrix} =  0.$$
Assume now that $T \not \in \mathcal T$. Then $\Phi(T)$ is an operator of rank greater than 2 in $\mathcal L(X^*,Y)$. Thus, there exists a linearly independent family $\{x_1^*,x_2^* \} \subset X^*$ such that $\Phi(T)(x_1^*) \ne 0$, $\Phi(T)(x_2^*) \ne 0$ and $\{\Phi(T)(x_1^*),\Phi(T)(x_2^*) \} \subset Y$ is a linearly independent family. To finish the proof, simply pick a linearly independent family $\{y_1^{*},y_2^{*} \} \subset Y^{*}$ satisfying: 
\begin{center}
\begin{tabular}{ c c }
 $\la \Phi(T)(x_1^*) , y_1^* \ra \ne 0$ & $\la \Phi(T)(x_1^*) , y_2^* \ra =  0$  \\ 
$ \la \Phi(T)(x_2^*) , y_1^* \ra =  0$ & $\la \Phi(T)(x_2^*) , y_2^* \ra \ne 0.$
\end{tabular}
\end{center}
\end{proof}

\begin{proposition} \label{propT}
Let $X,Y$ be two Banach spaces such that $X$ or $Y$ has the AP. Then the set of elementary tensors $\mathcal T$ is weakly closed in $X \pten Y$.
\end{proposition}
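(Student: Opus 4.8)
The plan is to derive weak closedness directly from the determinant characterisation in Lemma~\ref{lem:matrix}, exploiting that the quantities appearing in $(\star)$ are evaluations against elements of the dual space $(X \pten Y)^* \equiv \mathcal{B}(X \times Y)$. First I would take an arbitrary net $(T_\lambda) \subset \mathcal T$ with $T_\lambda \to T$ weakly in $X \pten Y$, and fix two linearly independent families $\{x_1^*, x_2^*\} \subset X^*$ and $\{y_1^*, y_2^*\} \subset Y^*$. The goal is to verify that $T$ satisfies $(\star)$ for these families; since they are arbitrary, Lemma~\ref{lem:matrix} will then give $T \in \mathcal T$.

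The key observation is that for each $i,j$ the map $(x,y) \mapsto x_i^*(x)\, y_j^*(y)$ is a bounded bilinear form on $X \times Y$, so $x_i^* \otimes y_j^* \in \mathcal{B}(X \times Y) \equiv (X \pten Y)^*$. Weak convergence of $(T_\lambda)$ therefore forces each of the four scalars $\la T_\lambda, x_i^* \otimes y_j^* \ra$ to converge to $\la T, x_i^* \otimes y_j^* \ra$. Since the $2 \times 2$ determinant is a polynomial, hence continuous, function of its four entries, I can pass to the limit:
\[
\begin{vmatrix}
 \la T, x_1^* \otimes y_1^* \ra & \la T, x_1^* \otimes y_2^* \ra \\
 \la T, x_2^* \otimes y_1^* \ra & \la T, x_2^* \otimes y_2^* \ra
\end{vmatrix}
= \lim_\lambda
\begin{vmatrix}
 \la T_\lambda, x_1^* \otimes y_1^* \ra & \la T_\lambda, x_1^* \otimes y_2^* \ra \\
 \la T_\lambda, x_2^* \otimes y_1^* \ra & \la T_\lambda, x_2^* \otimes y_2^* \ra
\end{vmatrix}
= 0,
\]
the last equality holding because every $T_\lambda \in \mathcal T$ satisfies $(\star)$. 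Thus $T$ satisfies $(\star)$, and Lemma~\ref{lem:matrix} yields $T \in \mathcal T$.

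The only subtle point — and what I expect to be the conceptual obstacle — is that $(\star)$ involves \emph{products} of the functionals $T \mapsto \la T, x_i^* \otimes y_j^* \ra$, and a product of weak-continuous functionals is in general not weak-continuous on $X \pten Y$; so one cannot simply argue that the left-hand side of $(\star)$ is a weak-continuous function of $T$ and invoke closedness of the zero set of a continuous map. This difficulty is sidestepped by treating the determinant as a fixed continuous function of the four \emph{converging scalars}, rather than as a single functional on the tensor product: continuity of $\det$ on $\R^4$ then does all the work. A pleasant byproduct of arguing through the convergence of individual entries is that nothing in the above uses metrizability or countability, so the proof applies verbatim to nets and therefore establishes genuine weak closedness, not merely sequential weak closedness.
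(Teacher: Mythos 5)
Your proof is correct and is essentially the paper's argument: the paper packages the same computation by writing $\mathcal T = \bigcap_{S} D_S^{-1}(\{0\})$, where $D_S$ is the determinant associated to a quadruple $S=(x_1^*,x_2^*,y_1^*,y_2^*)$, and observing that each $D_S$ is weakly continuous, so $\mathcal T$ is an intersection of weakly closed sets by Lemma~\ref{lem:matrix}. One remark on your final paragraph: the ``conceptual obstacle'' you describe does not exist --- a finite product of weakly continuous \emph{scalar-valued} functions is automatically weakly continuous (compose the continuous map $T \mapsto (\la T, a\ra, \la T, b\ra) \in \R^2$ with multiplication), so $D_S$ is genuinely continuous for the weak topology and your net argument is just an equivalent reformulation of that fact.
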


\begin{proof} We let $I$ be the set of all vectors $(x_1^*,x_2^* ,y_1^{*},y_2^{*})$ such that $\{x_1^*,x_2^* \} \subset X^*$ and $\{y_1^{*},y_2^{*} \} \subset Y^{*}$ are both linearly independent families. Next, for every $T \in X \pten Y$ and $S=(x_1^*,x_2^* ,y_1^{*},y_2^{*}) \in I$, we define
$$D_S(T) = \begin{vmatrix}
 \la  T , x_1^* \otimes y_1^{*} \ra  &  \la  T , x_1^* \otimes y_2^{*} \ra  \\ 
 \la  T , x_2^* \otimes y_1^{*} \ra  &  \la  T , x_2^* \otimes y_2^{*} \ra  
 \end{vmatrix}.$$
The result now directly follows from Lemma~\ref{lem:matrix} together with the fact that $D_S$ is continuous with respect to the weak topology. Indeed, one can write $\mathcal T$ as an intersection of weakly closed sets:
$$ \mathcal{T} =\bigcap_{S \in I} D_S^{-1}(\{0\}). $$
\end{proof}

The next corollary answers \cite[Question 3.9]{RR_23} positively under rather general assumptions. 

\begin{corollary} \label{cor:wnull}
    Let $X$ and $Y$ be Banach spaces such that $X$ or $Y$ has the AP. 
    If $(x_n)_{n\in \N} \subset X$ converges weakly to $x$, $(y_n)_{n\in \N} \subset Y$ converges weakly to $y$, and $(x_n \otimes y_n)_{n \in \N}$ is weakly convergent in $X \pten Y$, then $(x_n \otimes y_n)_{n \in \N}$ converges weakly to $x \otimes y$.
\end{corollary}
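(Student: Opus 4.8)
The plan is to combine the weak-closedness of $\mathcal T$ established in Proposition~\ref{propT} with a direct identification of the limit obtained by evaluating against elementary functionals. First I would use the standing hypothesis that $(x_n \otimes y_n)_{n}$ is weakly convergent in $X \pten Y$ and call its limit $T$. Each $x_n \otimes y_n$ belongs to $\mathcal T$, and since $X$ or $Y$ has the AP, Proposition~\ref{propT} tells us that $\mathcal T$ is weakly closed; hence the weak limit satisfies $T \in \mathcal T$, i.e.\ $T = a \otimes b$ for some $a \in X$ and $b \in Y$. It then remains only to identify $a \otimes b$ with $x \otimes y$.

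Next I would test $T$ against the elementary functionals $x^* \otimes y^* \in \mathcal B(X \times Y) = (X \pten Y)^*$, for arbitrary $x^* \in X^*$ and $y^* \in Y^*$. On the one hand, weak convergence of $(x_n \otimes y_n)_n$ to $T$ gives $\langle x_n \otimes y_n , x^* \otimes y^* \rangle \to \langle T , x^* \otimes y^* \rangle = x^*(a)\,y^*(b)$. On the other hand, $\langle x_n \otimes y_n , x^* \otimes y^* \rangle = x^*(x_n)\,y^*(y_n)$, and since $x_n \wconv x$ and $y_n \wconv y$ this product of scalars converges to $x^*(x)\,y^*(y)$. Comparing the two limits yields
\[
x^*(a)\,y^*(b) = x^*(x)\,y^*(y) \qquad \text{for all } x^* \in X^*,\ y^* \in Y^*.
\]

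Finally, I would deduce $a \otimes b = x \otimes y$ from this family of scalar identities by a short algebraic argument. If $x = 0$ or $y = 0$, the right-hand side vanishes identically, which (via Hahn--Banach) forces $a = 0$ or $b = 0$, so both tensors are zero and we are done. Otherwise, fix $x_0^* \in X^*$ with $x_0^*(x) = 1$ and $y_0^* \in Y^*$ with $y_0^*(y) = 1$. Specializing to $y^* = y_0^*$ gives $x^*(x) = y_0^*(b)\,x^*(a)$ for every $x^*$, hence $x = y_0^*(b)\,a$; symmetrically $y = x_0^*(a)\,b$; and evaluating the identity at $(x_0^*, y_0^*)$ shows $x_0^*(a)\,y_0^*(b) = 1$, so the two scalar factors multiply to $1$ and $x \otimes y = a \otimes b = T$. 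I expect the only delicate point to be this last identification step, together with its degenerate cases; the convergence of the coordinatewise scalars is immediate from the weak convergence of $(x_n)_n$ and $(y_n)_n$, and the reduction to an elementary tensor is handed to us directly by Proposition~\ref{propT}.
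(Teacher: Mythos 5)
Your proof is correct. It shares the paper's opening move --- Proposition~\ref{propT} forces the weak limit $T$ to be an elementary tensor $a \otimes b$, and testing against the functionals $x^* \otimes y^*$ gives $x^*(a)y^*(b) = x^*(x)y^*(y)$ for all $x^*, y^*$ --- but the two arguments diverge in how they finish. The paper carries out the direct identification only in the weakly null case, where the conclusion $T = 0$ drops out of a one-line contradiction with norming functionals, and then deduces the general case from that one via the algebraic identity $(x - x_n)\otimes(y - y_n) = x \otimes y - x \otimes y_n - x_n \otimes y + x_n \otimes y_n$ together with the easy weak convergences $x \otimes y_n \to x \otimes y$ and $x_n \otimes y \to x \otimes y$. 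You instead treat the general case head-on by proving the small linear-algebra fact that two elementary tensors agreeing against every $x^* \otimes y^*$ must coincide; your case analysis ($x$ or $y$ zero versus both nonzero, with the normalizations $x_0^*(x) = y_0^*(y) = 1$) is complete, and the bookkeeping $x = y_0^*(b)\,a$, $y = x_0^*(a)\,b$, $x_0^*(a)\,y_0^*(b) = 1$ does yield $a \otimes b = x \otimes y$. The trade-off is minor: the paper's reduction avoids your degenerate-case analysis, while your route avoids introducing the auxiliary sequence $\big((x - x_n)\otimes(y - y_n)\big)_n$ and verifying its weak convergence.
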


Before proving this corollary, let us point out that the canonical basis $(e_n)_{n \in \N}$ of $\ell_2$ shows that if $(x_n)_{n\in \N}\subset X$ and $(y_n)_{n \in \N} \subset Y$ are weakly null sequences, the sequence $(x_n \otimes y_n)_{n \in \N}$ may fail to be weakly null in $X \pten Y$. Indeed, $(e_n \otimes e_n)_{n \in \N}$ is isometric to the $\ell_1$-canonical basis; see \cite[Example~2.10]{ryan}.

\begin{proof} 
    Assume first that $(x_n)_{n\in \N} \subset X$ and $(y_n)_{n\in \N} \subset Y$ are weakly null sequences such that $(x_n \otimes y_n)_{n \in \N}$ is weakly convergent in $X \pten Y$.
    Since $\mathcal T$ is weakly closed, there exists $x \in X$ and $y\in Y$ such that $x_n \otimes y_n \to x \otimes y$ in the weak topology. 
    Arguing by contradiction, suppose that $x \otimes y \neq 0$. Pick $x^* \in X^*$ and $y^* \in Y^*$ such that 
    $x^*(x) = \|x\| \neq 0$ and $y^*(y) = \|y\| \neq 0$. On the one hand, $x_n \otimes y_n \to x \otimes y$ weakly, so that 
    $$\la x^* \otimes y^* , x_n \otimes y_n \ra \to \la x^* \otimes y^* , x \otimes y \ra = x^*(x)y^*(y) =  \|x\|\|y\| \neq 0.$$
    On the other hand, since $(x_n)_{n\in \N}$ and $(y_n)_{n\in \N} $ are weakly null, one readily obtains a contradiction:
    $$\la x^* \otimes y^* , x_n \otimes y_n \ra = x^*(x_n) y^*(y_n) \to  0.$$

    Similarly, if $(x_n)_{n\in \N} \subset X$ converges weakly to $x$, $(y_n)_{n\in \N} \subset Y$ converges weakly to $y$, and $(x_n \otimes y_n)_{n \in \N}$ is weakly convergent in $X \pten Y$, then we write:
    $$(x-x_n)\otimes (y-y_n) = x \otimes y - x \otimes y_n - x_n \otimes y + x_n \otimes y_n.$$
    But, $x \otimes y_n \overset{w}{\underset{n\to +\infty}{\longrightarrow}} x\otimes y$ and $x_n \otimes y \overset{w}{\underset{n\to +\infty}{\longrightarrow}} x\otimes y$. Therefore $\big((x-x_n)\otimes (y-y_n)\big)_{n \in \N}$ converges weakly and moreover the weak limit must be 0 thanks to the first part of the proof. This implies that $x_n \otimes y_n \overset{w}{\underset{n\to +\infty}{\longrightarrow}} x\otimes y$.
\end{proof}

In connection with Proposition~\ref{propT}, we also wish to mention \cite[Theorem~2.3]{GGMR23} which we describe now. If $C$ and $D$ are subsets of $X$ and $Y$ respectively, then let 
$$C \otimes D :=\{x \otimes y \; : \; x \in C, \; y \in D \} \subset \mathcal T.$$
Theorem~2.3 in \cite{GGMR23} states that if $C$ and $D$ are bounded then $\overline{C}^w \otimes \overline{D}^w = \overline{C \otimes D}^w$ in $X \pten Y$. The technique which we introduced in the present note permits to remove the boundedness assumption in the particular case when $C$ and $D$ are subspaces. It also allows us to slighly simplify the original proof of \cite[Theorem~2.3]{GGMR23}. The next lemma is the main ingredient. 

\begin{lemma} \label{lemma2}
    Let $X$ and $Y$ be Banach spaces such that $X$ or $Y$ has the AP.\\ Let $(x_s)_s \subset X$ and $(y_s)_s \subset Y$ be two nets such that $x_s \to x^{**}$ in the weak$^*$-topology of $X^{**}$, $y_s \to y^{**}$ in the weak$^*$-topology of $Y^{**}$, and $(x_s\otimes y_s)_s$ converges in the weak$^*$-topology of $(X \pten Y)^{**}$. Then $(x_s\otimes y_s)_s$ converges weakly$^*$ to $x^{**} \otimes y^{**}$.
\end{lemma}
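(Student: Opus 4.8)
The plan is to imitate the proof of Corollary~\ref{cor:wnull}, replacing weak convergence in $X \pten Y$ by weak$^*$ convergence in the bidual and the functionals $x^*,y^*$ by elements of $X^{**},Y^{**}$. First I would make sense of $x^{**}\otimes y^{**}$ as an element of $(X\pten Y)^{**}=\mathcal B(X\times Y)^*$: using $\mathcal B(X\times Y)\equiv\mathcal L(X,Y^*)$, write $T_B\in\mathcal L(X,Y^*)$ for the operator associated with $B$ and set $\la x^{**}\otimes y^{**},B\ra:=\la x^{**},T_B^*\,y^{**}\ra$. This is bilinear in $(x^{**},y^{**})$ and reduces to $B(x,y)$ when $x^{**}=x\in X$, $y^{**}=y\in Y$, so it genuinely extends the tensor map. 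The key elementary observation is that, for a \emph{fixed} second variable, $\xi\mapsto\xi\otimes y^{**}$ is weak$^*$-to-weak$^*$ continuous (since $\la\xi\otimes y^{**},B\ra=\la\xi,T_B^*y^{**}\ra$ is weak$^*$ continuous in $\xi$, the vector $T_B^*y^{**}\in X^*$ being fixed), and symmetrically in the first variable.

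Let $w^{**}$ denote the weak$^*$ limit of $(x_s\otimes y_s)$. I would next reduce to a ``null'' statement exactly as in Corollary~\ref{cor:wnull}, using the identity in $(X\pten Y)^{**}$
\[
x_s\otimes y_s=(x_s-x^{**})\otimes(y_s-y^{**})+x_s\otimes y^{**}+x^{**}\otimes y_s-x^{**}\otimes y^{**}.
\]
By the separate weak$^*$ continuity above, $x_s\otimes y^{**}\to x^{**}\otimes y^{**}$ and $x^{**}\otimes y_s\to x^{**}\otimes y^{**}$ weak$^*$; hence $\big((x_s-x^{**})\otimes(y_s-y^{**})\big)_s$ converges weak$^*$ with limit $w^{**}-x^{**}\otimes y^{**}$. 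It thus suffices to prove: if $\xi_s\to 0$ weak$^*$ in $X^{**}$, $\eta_s\to 0$ weak$^*$ in $Y^{**}$ and $(\xi_s\otimes\eta_s)_s$ converges weak$^*$, then its limit is $0$.

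For this I would first test against elementary functionals. For $x^*\in X^*$, $y^*\in Y^*$ one has $x^*\otimes y^*\in\mathcal B(X\times Y)=(X\pten Y)^*$ and
\[
\la \xi_s\otimes\eta_s,\,x^*\otimes y^*\ra=\la\xi_s,x^*\ra\,\la\eta_s,y^*\ra\longrightarrow 0,
\]
so the limit annihilates every elementary tensor $x^*\otimes y^*$, hence (by linearity) every $B$ with $T_B$ of finite rank; equivalently $w^{**}$ and $x^{**}\otimes y^{**}$ agree on all finite-rank functionals. To promote this to all of $\mathcal B(X\times Y)$ I would bring in the approximation property through the injective linearization $\Phi\colon X\pten Y\to\mathcal L(X^*,Y)$ of Lemma~\ref{lem:matrix}: injectivity of $\Phi$ makes $\Phi^{**}\colon (X\pten Y)^{**}\to\mathcal L(X^*,Y)^{**}$ injective and weak$^*$-to-weak$^*$ continuous, so it is enough to compute $\Phi^{**}(w^{**})$, which equals the weak$^*$ limit of the rank-one operators $\Phi(x_s\otimes y_s)\colon x^*\mapsto\la x_s,x^*\ra\,y_s$, and to recognize it as $\Phi^{**}(x^{**}\otimes y^{**})$.

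The step I expect to be the main obstacle is precisely this last promotion. The elementary tensors $x^*\otimes y^*$ are only weak$^*$ dense, not norm dense, in $(X\pten Y)^*=\mathcal B(X\times Y)$, so vanishing on finite-rank functionals does not formally yield vanishing everywhere; this is exactly where the approximation property must be used in an essential, non-formal way. Concretely, the difficulty is to match the pointwise-weak$^*$ topology on $\mathcal L(X^*,Y^{**})$ with the genuine weak$^*$ topology of $\mathcal L(X^*,Y)^{**}$ under $\Phi^{**}$; a uniform boundedness argument — the scalars $\la\xi_s,T_B^*\eta_s\ra$ being bounded in $s$ for each fixed $B$, which forces $\sup_s\|\xi_s\|\,\|\eta_s\|<\infty$ — should be the tool that reconciles the two limits and closes the proof.
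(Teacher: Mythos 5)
There is a genuine gap here, and you have in fact located it yourself: the final ``promotion'' step is not established, and the tools you propose for it do not suffice. First, the uniform boundedness argument is unavailable because you are working with \emph{nets}: a weak$^*$-convergent net need not be bounded (even a convergent net of scalars need not be bounded, so the pointwise bounds on $s\mapsto\la\xi_s,T_B^*\eta_s\ra$ fail from the start), and hence you cannot extract $\sup_s\|\xi_s\|\,\|\eta_s\|<\infty$. Second, and more fundamentally, even for bounded nets the argument does not close: two elements of $(X\pten Y)^{**}$ that agree on every finite-rank $B\in\mathcal B(X\times Y)$ need not coincide, precisely because, as you note, those functionals are only weak$^*$-dense; and the injectivity of $\Phi^{**}$ does not help, since the limit of $\Phi(x_s\otimes y_s)$ in the genuine weak$^*$ topology $\sigma\big(\mathcal L(X^*,Y)^{**},\mathcal L(X^*,Y)^*\big)$ cannot be computed from the pointwise-weak$^*$ convergence of these rank-one operators. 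A secondary problem is that your ``symmetric'' separate continuity claim is false: with the definition $\la\xi\otimes\eta,B\ra=\la\xi,T_B^*\eta\ra$, the map $\eta\mapsto x^{**}\otimes\eta$ is \emph{not} weak$^*$-to-weak$^*$ continuous when $x^{**}\in X^{**}\setminus X$; the cross term $x^{**}\otimes y_s$ converges to the \emph{other} Arens extension of the tensor, which agrees with $x^{**}\otimes y^{**}$ only on finite-rank functionals. So even granting your null case, the decomposition would only identify the limit up to this ambiguity.

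The missing idea is the one that carries the whole of Corollary~\ref{cor:wnull}, whose proof the paper says to imitate: before testing against elementary functionals one must know that the limit is itself an \emph{elementary tensor}; only then does agreement on the functionals $x^*\otimes y^*$ determine it. Concretely, in the situation where the lemma is applied (Corollary~\ref{Cor-WeakClosure}$(ii)$), the net $(x_s\otimes y_s)_s$ also converges weakly in $X\pten Y$, so Proposition~\ref{propT} identifies its limit as some $x\otimes y\in\mathcal T$; testing against $x^*\otimes y^*$ then gives $x^*(x)y^*(y)=\la x^{**},x^*\ra\,\la y^{**},y^*\ra$ for all $x^*,y^*$, which forces $x^{**}\otimes y^{**}=x\otimes y$ (when $x\otimes y\ne 0$ the elements $x^{**},y^{**}$ are scalar multiples of $x,y$, so the Arens ambiguity disappears because everything lies in $X\pten Y$). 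Your density/injectivity route replaces this closedness ingredient by something strictly weaker, and cannot be repaired as written; any complete proof of the lemma in its stated generality must first establish a weak$^*$ analogue of Proposition~\ref{propT} for the limit point, which is exactly the step your proposal leaves open.
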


The proof is essentially the same as that of Corollary~\ref{cor:wnull}, so we leave the details to the reader.

\begin{corollary} \label{Cor-WeakClosure}
    Let $X$ and $Y$ be Banach spaces such that $X$ or $Y$ has the AP.\\ 
     If $C$ and $D$ are subsets of $X$ and $Y$ respectively, then $\overline{C}^w \otimes \overline{D}^w = \overline{C \otimes D}^w$ if one of the following additional assumptions are satisfied: 
     \begin{enumerate}[$(i)$]
         \item If $C$ and $D$ are subspaces.
         \item If $C$ and $D$ are bounded. 
     \end{enumerate}
\end{corollary}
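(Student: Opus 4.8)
The plan is to establish the two inclusions separately, observing that one of them requires neither hypothesis $(i)$ nor $(ii)$. For the inclusion $\overline{C}^w \otimes \overline{D}^w \subseteq \overline{C \otimes D}^w$ I would exploit that, for each fixed $y \in Y$, the map $x \mapsto x \otimes y$ is a bounded linear operator from $X$ into $X \pten Y$ (of norm $\|y\|$), hence weak-to-weak continuous; symmetrically for $y \mapsto x \otimes y$. Running this in two steps — first fixing $x \in C$ and letting a net $y_t \in D$ converge weakly to $y \in \overline{D}^w$ to get $C \otimes \overline{D}^w \subseteq \overline{C \otimes D}^w$, then fixing $y \in \overline{D}^w$ and letting a net $x_s \in C$ converge weakly to $x \in \overline{C}^w$ — yields the desired inclusion using only that $\overline{C \otimes D}^w$ is weakly closed. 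This part is soft and works for arbitrary $C, D$.

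The substance lies in the reverse inclusion $\overline{C \otimes D}^w \subseteq \overline{C}^w \otimes \overline{D}^w$. Here the starting point is Proposition~\ref{propT}: since $C \otimes D \subseteq \mathcal{T}$ and $\mathcal{T}$ is weakly closed, any $z \in \overline{C \otimes D}^w$ is automatically an elementary tensor $z = u \otimes v$. Fix a net $x_s \otimes y_s \to u \otimes v$ weakly with $x_s \in C$, $y_s \in D$. Under hypothesis $(i)$ I would first reduce to $C, D$ closed subspaces (their weak and norm closures agree by Mazur's theorem, and replacing $C, D$ by these closures alters neither side of the identity), so that $\overline{C}^w = C$ and $\overline{D}^w = D$. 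For $z \neq 0$, choosing $y^* \in Y^*$ with $y^*(v) \neq 0$ and applying the weak-to-weak continuous contraction $\mathrm{id}_X \otimes y^* : X \pten Y \to X$ to the net gives $y^*(y_s)\, x_s \to y^*(v)\, u$ weakly; as each $y^*(y_s)\, x_s$ lies in the weakly closed subspace $C$, we get $u \in C$, and symmetrically $v \in D$. The case $z = 0$ is immediate since $0 \in C$ and $0 \in D$.

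The main obstacle is hypothesis $(ii)$, where $C$ and $D$ are merely bounded and scalars can no longer be absorbed into a subspace; this is exactly where I would bring in the bidual. Using weak$^*$-compactness of bounded sets, I would pass to a subnet with $x_s \to x^{**}$ in $\sigma(X^{**}, X^*)$ and $y_s \to y^{**}$ in $\sigma(Y^{**}, Y^*)$, so that $x^{**} \in \overline{C}^{w^*}$ and $y^{**} \in \overline{D}^{w^*}$. Lemma~\ref{lemma2} then identifies the weak$^*$ limit of $x_s \otimes y_s$ in $(X \pten Y)^{**}$ as $x^{**} \otimes y^{**}$, which by uniqueness of weak$^*$ limits must coincide with $u \otimes v$. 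Testing the equality $x^{**} \otimes y^{**} = u \otimes v$ against rank-one functionals $x^* \otimes y^*$ gives $x^{**}(x^*)\,y^{**}(y^*) = x^*(u)\,y^*(v)$ for all $x^*, y^*$; when $z \neq 0$ the rigidity of nonzero elementary tensors forces $x^{**} = \lambda u$ and $y^{**} = \lambda^{-1} v$ for some scalar $\lambda \neq 0$.

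The final — and most delicate — step is the descent from the bidual back to the spaces: I would use that the restriction of $\sigma(X^{**}, X^*)$ to $X$ is precisely the weak topology of $X$, so that an element of $\overline{C}^{w^*}$ which happens to lie in $X$ necessarily belongs to $\overline{C}^w$. Applied to $x^{**} = \lambda u \in X$ and $y^{**} = \lambda^{-1} v \in Y$, this yields $\lambda u \in \overline{C}^w$ and $\lambda^{-1} v \in \overline{D}^w$, whence $z = u \otimes v = (\lambda u) \otimes (\lambda^{-1} v) \in \overline{C}^w \otimes \overline{D}^w$. The degenerate case $z = 0$ is handled in the same spirit: then $x^{**} \otimes y^{**} = 0$ forces $x^{**} = 0$ or $y^{**} = 0$, and the same topological identification places $0$ in $\overline{C}^w$ or $\overline{D}^w$, giving $0 \in \overline{C}^w \otimes \overline{D}^w$ as soon as $C$ and $D$ are nonempty. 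I expect the rigidity of elementary tensors together with this bidual-to-space descent to be the crux; the remaining manipulations are routine.
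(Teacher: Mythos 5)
Your proof is correct and follows essentially the same route as the paper: Proposition~\ref{propT} to identify the weak limit as an elementary tensor, a direct functional argument in the subspace case, and weak$^*$-compactness of bounded sets plus Lemma~\ref{lemma2} with the rigidity-and-descent step ($\overline{C}^{w^*}\cap X=\overline{C}^w$) in the bounded case. The only difference is one of detail: you spell out the easy inclusion and the subspace case explicitly, where the paper simply cites \cite{GGMR23} and invokes Proposition~\ref{propT}.
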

    
\begin{proof}
    First of all, it is readily seen that one has $\overline{C}^w \otimes \overline{D}^w \subset  \overline{C \otimes D}^w$ without any additional assumption on $C$ and $D$ (see the first part of the proof of \cite[Theorem~2.3]{GGMR23}). Therefore we only have to prove the reverse inclusion in both cases. 

    To prove $(i)$, it suffices to apply Proposition~\ref{propT}: 
    $$C \otimes D \subset \overline{C} \otimes \overline{D} \implies \overline{C \otimes D}^w \subset \overline{\overline{C} \otimes \overline{D}}^w = \overline{C} \otimes \overline{D}.    $$ 

    To prove $(ii)$, let $z \in  \overline{C \otimes D}^w$. We fix a net $(x_s \otimes y_s)_s\subset C \otimes D$ which converges weakly to $z$. Thanks to Proposition~\ref{propT}, there exist $x \in X$ and $y \in Y$ such that $z = x \otimes y$. Since $C$ and $D$ are bounded, up to taking a suitable subnet, we may assume that both $x_s \to x^{**}$ in the weak$^*$-topology of $X^{**}$ and $y_s \to y^{**}$ in the weak$^*$-topology of $Y^{**}$. Thanks to Lemma~\ref{lemma2}, $x_s \otimes y_s \to x^{**} \otimes y^{**}$ in the weak$^*$-topology of $(X \pten Y)^{**}$. By uniqueness of the limit, $x^{**} \otimes  y^{**} = z = x \otimes y$. We distinguish two cases.

    If $z = 0$ then $x^{**} = 0$ or $y^{**} = 0$. Say $x^{**} = 0$ for instance. This means that $0 \in \overline{C}^w$. Now pick any $y \in C$ and observe that $z = 0 \otimes y$, which was to be shown.

    If $z \neq 0$, then it is readily seen that $x^{**} \in \mathrm{span}\{x\}$ and $y^{**} \in \mathrm{span}\{y\}$. Therefore $x^{**} \in \overline{C}^{w^*} \cap X = \overline{C}^w$ and $y^{**} \in \overline{D}^{w^*} \cap Y = \overline{D}^w$, which concludes the proof.
\end{proof}

\section{Applications to vector-valued Lipschitz free spaces}

If $M$ is a pointed metric space,  with base point $0 \in M$, and if $X$ is a real Banach space, then $\Lip_0(M,X)$ stands for the vector space of all Lipschitz maps from $M$ to $X$ which satisfy $f(0)=0$. Equipped with the Lipschitz norm:
$$ \forall f \in \Lip_0(M,X), \quad \|f\|_L = \sup_{x \neq y \in M} \frac{\|f(x)-f(y)\|_X}{d(x,y)},$$
$\Lip_0(M,X)$ naturally becomes a Banach space. When $X = \R$, it is customary to omit the reference to $X$, that is $\Lip_0(M):=\Lip_0(M,\R)$.
Next, for $x\in M$, we let $\delta(x) \in \Lip_0(M)^*$ be the evaluation functional defined by $\langle\delta(x) , f \rangle  = f(x), \ \forall f\in \Lip_0(M).$ The Lipschitz free space over $M$ is the Banach space
    $$\F(M) := \overline{ \mbox{span}}^{\| \cdot  \|}\left \{ \delta(x) \, : \, x \in M  \right \} \subset \Lip_0(M)^*.$$
The universal extension property of Lipschitz free spaces states that for every $f \in \Lip_{0}(M,X)$, there exists a unique continuous linear operator $\overline{f} \in \mathcal{L}(\F(M),X)$ such that:
\begin{enumerate}[label = (\roman*)]
\item $f=\overline{f} \circ \delta$, and
\item $\| \overline{f} \|_{\mathcal{L}(\F(M),X)} = \| f \|_L$.
\end{enumerate}
In particular, the next isometric identification holds: 
$$\Lip_0(M,X) \equiv \mathcal{L}(\F(M),X).$$
A direct application (in the case $X = \R$) provides another basic yet important information: 
$$\Lip_0(M) \equiv \F(M)^*.$$
It also follows from basic tensor product theory that $\Lip_0(M,X^*) \equiv (\F(M) \pten X)^*$, which leads to the next definition (see \cite{vectorvalued} for more details):

\begin{definition}[Vector-valued Lispschitz free spaces]
 Let $M$ be a pointed metric space and let $X$ be a Banach space. We define the $X$-valued Lipschitz free space over $M$ to be: $\F(M,X) :=  \F(M) \pten X$.
\end{definition}

\subsection{Weak closure of \texorpdfstring{$\delta(M,X)$}{delta(M,X)}.}

It is proved in \cite[Proposition~2.9]{GPPR18} that $\delta(M) = \{\delta(x) : x \in M \}$ is weakly closed in $\F(M)$ provided that $M$ is complete. Our first aim is to prove the vector-valued counterpart. For this purpose, we need to identify a set that corresponds to $\delta(M)$ in the vector-valued case. A legitimate set to look at is the following:
$$\delta(M,X)  := \{ \delta(y) \otimes x \; : \; y \in M, \, x \in X \} \subset \F(M,X).$$
Notice that this does not exactly correspond to $\delta(M)$ in the case $X = \R$ since we have $\delta(M,\R) = \R \cdot \delta(M)$. This discrepancy is not a major issue since $\R \cdot \delta(M)$ is also a weakly closed set when $M$ is complete. The next result is thus a natural extension to the vector valued setting of \cite[Proposition~2.9]{GPPR18}.

\begin{proposition} \label{prop:deltavectorclosed}
Let $M$ be a complete pointed metric space and $X$ be a Banach space such that $\F(M)$ or $X$ have the approximation property. Then $\delta(M,X)$ is weakly closed in $\Free(M,X)$.
\end{proposition}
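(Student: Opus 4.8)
The plan is to combine the set $\delta(M,X)$ with the characterization of elementary tensors already available. Note that $\delta(M,X) = \delta(M) \otimes X$ where $\delta(M) = \{\delta(y) : y \in M\}$ and we use the notation $C \otimes D$ from Corollary~\ref{Cor-WeakClosure}. Since $\Free(M,X) = \Free(M) \pten X$ and one of $\Free(M)$, $X$ has the AP, the set $\mathcal T$ of \emph{all} elementary tensors in $\Free(M) \pten X$ is weakly closed by Proposition~\ref{propT}. The key point is that $\delta(M,X)$ is \emph{not} all of $\mathcal T$: it is the restricted set of elementary tensors $z \otimes x$ whose first factor $z$ comes specifically from $\delta(M)$. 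So I would aim to show that $\delta(M,X)$ is weakly closed by intersecting $\mathcal T$ with a further weakly closed condition that captures ``the free-space factor lies in $\delta(M)$''.

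First I would take a net $(\delta(y_s) \otimes x_s)_s$ in $\delta(M,X)$ converging weakly to some $z \in \Free(M) \pten X$. By Proposition~\ref{propT}, $z \in \mathcal T$, so $z = \mu \otimes x$ for some $\mu \in \Free(M)$ and $x \in X$. If $z = 0$ the statement is trivial (using completeness to note $\delta(M,X)$ contains $0$, e.g. $\delta(0)\otimes 0$), so assume $z \neq 0$, whence $x \neq 0$ and $\mu \neq 0$. The natural next step is to recover $\mu$ as a weak limit of scalar multiples of the $\delta(y_s)$: pick $x^* \in X^*$ with $x^*(x) \neq 0$, and test the weak convergence against functionals of the form $\psi \otimes x^* \in (\Free(M) \pten X)^*$ with $\psi \in \Lip_0(M) = \Free(M)^*$. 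This gives $\langle \psi \otimes x^*, \delta(y_s)\otimes x_s\rangle = \psi(y_s)\, x^*(x_s) \to \langle \psi \otimes x^*, \mu \otimes x\rangle = \langle \psi, \mu\rangle\, x^*(x)$.

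The idea then is that, after passing to a subnet so that $x^*(x_s) \to \lambda$ (the net $(x_s)$ being bounded — this is the delicate point — or at least $x^*(x_s)$ being bounded), one obtains that $x^*(x_s)\,\delta(y_s) \to x^*(x)\,\mu$ weakly in $\Free(M)$, i.e. (dividing by $x^*(x) \neq 0$) a scalar multiple of $\mu$ is a weak limit of scalar multiples $\tfrac{x^*(x_s)}{x^*(x)}\delta(y_s)$ of point evaluations. Since $\R\cdot\delta(M)$ is weakly closed in $\Free(M)$ when $M$ is complete (as recalled in the paragraph preceding the proposition, via \cite[Proposition~2.9]{GPPR18}), it follows that $\mu \in \R\cdot\delta(M)$, say $\mu = t\,\delta(y)$ with $t \in \R$ and $y \in M$. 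Then $z = \mu \otimes x = \delta(y) \otimes (t x) \in \delta(M,X)$, which is exactly what we want.

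The main obstacle I anticipate is the boundedness/normalization issue: an arbitrary net in $\delta(M,X)$ need not have bounded $(x_s)$ factors, and without control on $x^*(x_s)$ one cannot directly extract the scalar $\lambda$ nor conclude that $x^*(x_s)\,\delta(y_s)$ stays in a weakly relatively compact — or at least weakly closed — set. To handle this cleanly I would instead argue through $\Free(M)$ directly: using that $z \neq 0$ forces $\mu \in \mathrm{span}\{\mu\}$ to be pinned down, and that weak convergence of $(\delta(y_s)\otimes x_s)_s$ combined with a fixed $x^*$ with $x^*(x)\neq 0$ makes the scalars $x^*(x_s)$ eventually bounded and bounded away from $0$ (else the paired limit could not be the nonzero value $\langle\psi,\mu\rangle x^*(x)$ for a suitable $\psi$). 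This lets one normalize and deduce $\delta(y_s) \to \mu/t$ weakly along a subnet for an appropriate scalar $t$, so that $\mu/t \in \overline{\delta(M)}^w = \delta(M)$ by completeness of $M$. Once this normalization is secured the rest is the routine bookkeeping indicated above.
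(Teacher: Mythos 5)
Your argument follows essentially the same route as the paper: use Proposition~\ref{propT} to write the weak limit as an elementary tensor $\mu\otimes x$, pick $x^*$ with $x^*(x)\neq 0$, and then invoke weak closedness of $\R\cdot\delta(M)$ in $\F(M)$ (for complete $M$) to identify $\mu$. However, the ``delicate point'' you isolate --- boundedness of $(x_s)$ and extraction of a subnet along which $x^*(x_s)\to\lambda$ --- is a non-issue, and the fix you sketch is not correct: the scalars $x^*(x_s)$ need not be eventually bounded, nor bounded away from $0$ (for instance $y_s$ can tend to the base point while $\|x_s\|\to\infty$, and the products $\psi(y_s)x^*(x_s)$ can still converge to a nonzero limit for every $\psi$). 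The point, which is exactly how the paper argues, is that no such control is needed: for each $\psi\in\Lip_0(M)=\F(M)^*$ one has $\langle\psi\otimes x^*,\delta(y_s)\otimes x_s\rangle=\psi(y_s)\,x^*(x_s)\to\langle\psi,\mu\rangle\,x^*(x)$, and this is \emph{already} the statement that the net $\big(\tfrac{x^*(x_s)}{x^*(x)}\delta(y_s)\big)_s\subset\R\cdot\delta(M)$ converges weakly to $\mu$ in $\F(M)$. Weak closedness of $\R\cdot\delta(M)$ then gives $\mu=t\,\delta(y)$ directly, with no subnet, no normalization, and no need for $\delta(y_s)$ itself to converge; the conclusion $z=\delta(y)\otimes(tx)\in\delta(M,X)$ follows as you indicate.
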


\begin{proof}
In what follows, $\mathcal T$ denotes the elementary tensors in $\F(M) \pten X$. Consider a net $(\delta(m_{\alpha})\otimes x_\alpha)_\alpha \subset \delta(M,X)$ which is weakly convergent. Since $\delta(M,X) \subset \mathcal T$ and $\mathcal T$ is weakly closed (Proposition~\ref{propT}), there exist $\gamma \in \F(M)$ and $x\in X$ such that the net goes to $\gamma \otimes x$ in the weak topology. We may assume that $x \neq 0$, otherwise there is nothing to do. Pick $x^* \in X^*$ such that $x^*(x)\neq 0$. Then, for every $f \in \Lip_0(M)$, we have that $f(m_\alpha)x^*(x_\alpha) \to f(\gamma) x^*(x)$. So the net $\big(\frac{x^*(x_\alpha)}{x^*(x)} \delta(m_\alpha)\big)_\alpha \subset \R \cdot \delta(M)$ weakly converges to $\gamma$. Since $\R \cdot \delta(M)$ is weakly closed, there is $\lambda \in \R$ and $m \in M$ such that $\gamma = \lambda \delta(m)$. Consequently $\gamma \otimes x = \delta(m) \otimes \lambda x \in \delta(M,X)$.
\end{proof}

\subsection{Natural preduals}
Next, following \cite[Section 3]{GPPR18}, $S \subset \Lip_0(M)$ is a natural predual of $\F(M)$ if $S^* \equiv \F(M)$ and $\delta(B(0,r))$ is $\sigma(\F(M), S)$-closed for every $r\geq 0$. A reasonable extension of this notion in the vector-valued setting is the following. 

\begin{definition}\label{def:naturalvector}
Let $M$ be a pointed metric space and $X$ be a Banach space with $\dim(X)\geq 2$. We say that a Banach space $S$ is a natural predual of $\Free(M,X^*)$ if $Y^*\equiv \Free(M,X^*)$  and 
$$\delta(B(0,r),X^*) = \{\delta(m) \otimes x^* \, : \, m \in B(0,r), \, x^* \in X^* \} \subset \F(M,X^*)$$
is $\sigma(\Free(M,X^*),S)$-closed for every $r \geq 0$.
\end{definition}

Notice again that $\delta(B(0,r),\R) = \R \cdot \delta(B(0,r))$. In the next statement, $\lip_0(M)$ denotes the subspace of $\Lip_0(M)$ of all uniformly locally flat functions. Recall that $f \in \Lip_0(M)$ is uniformly locally flat if 
$$ \lim\limits_{d(x,y) \to 0} \frac{|f(x) - f(y)|}{d(x,y)} = 0.$$

\begin{lemma} \label{lemma:weakstarclosed}
Let $M$ be a separable pointed metric space. Suppose that $S\subset \lip_0(M)$ is a natural predual of $\F(M)$.
Then, for every $r \geq0$, $\R \cdot \delta(B(0,r))$ is weak$^*$ closed in $\F(M)$.
\end{lemma}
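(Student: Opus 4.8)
The plan is to combine the weak$^*$ compactness of $\delta(B(0,r))$ with the uniform local flatness of the functions in $S$, reducing everything to the behaviour of the point masses $\delta(m_s)$. Note first that, $M$ being separable, $\F(M)$ is separable and hence so is its predual $S$; therefore the topology $\sigma(\F(M),S)$ is metrizable on bounded subsets of $\F(M)$. The case $r=0$ is trivial, so I assume $r>0$, let $\gamma$ belong to the weak$^*$ closure of $\R\cdot\delta(B(0,r))$, and fix a net $(\lambda_s\delta(m_s))_s$ in $\R\cdot\delta(B(0,r))$, with $m_s\in B(0,r)$, converging weak$^*$ to $\gamma$. If $\gamma=0$ there is nothing to prove, so I assume $\gamma\neq 0$.

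First I would extract information from the point masses alone. The net $(\delta(m_s))_s$ lies in $\delta(B(0,r))$, which is bounded (by $r$) and, by hypothesis, $\sigma(\F(M),S)$-closed, hence weak$^*$ compact; passing to a subnet I may thus assume $\delta(m_s)\to\delta(m)$ weak$^*$ for some $m\in B(0,r)$. The argument then splits according to whether $\delta(m)\neq 0$ or $\delta(m)=0$. If $\delta(m)\neq 0$, I pick $g\in S$ with $g(m)=\la\delta(m),g\ra\neq 0$, which is possible because $S$ norms $\F(M)$. Then $g(m_s)\to g(m)\neq 0$ while $\la\lambda_s\delta(m_s),g\ra=\lambda_s g(m_s)\to\la\gamma,g\ra$, so $\lambda_s\to\la\gamma,g\ra/g(m)=:\lambda$; in particular $(\lambda_s)$ is convergent, hence bounded. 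Testing against an arbitrary $f\in S$ gives $\la\lambda_s\delta(m_s),f\ra=\lambda_s\la\delta(m_s),f\ra\to\lambda\la\delta(m),f\ra$, so that $\gamma=\lambda\delta(m)\in\R\cdot\delta(B(0,r))$, as wanted.

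The case $\delta(m)=0$, that is $\delta(m_s)\to 0$ weak$^*$, is the crux and the step I expect to give the most trouble. Here the scalars are forced to blow up, and one must rule out that this manufactures a nonzero weak$^*$ limit; this is exactly where the hypothesis $S\subseteq\lip_0(M)$ enters. Writing $u_s:=\delta(m_s)/d(m_s,0)$ for the normalised molecules (so $\|u_s\|=1$), uniform local flatness yields $u_s\to 0$ weak$^*$: given $f\in S$ and $\ep>0$, I split the net into the indices with $d(m_s,0)$ below the flatness threshold (where $|\la u_s,f\ra|=|f(m_s)-f(0)|/d(m_s,0)<\ep$ directly) and those with $d(m_s,0)$ bounded below (where $\la u_s,f\ra\to 0$ follows from $\delta(m_s)\to 0$). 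It then remains to deduce $\gamma=0$, which contradicts $\gamma\neq 0$ and closes the argument.

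The delicate point is precisely this last deduction. Since $\lambda_s\delta(m_s)=\|\lambda_s\delta(m_s)\|\,(\pm u_s)$ with $u_s\to 0$ weak$^*$, one is facing an $\infty\cdot 0$ indeterminacy, and a single test function does not suffice. I would resolve it by using separability once more to reduce to a \emph{bounded} sequence: on bounded sets $\sigma(\F(M),S)$ is metrizable, so that once the norms $\|\lambda_s\delta(m_s)\|$ are known to stay bounded one obtains $|\la\lambda_s\delta(m_s),f\ra|\le\bigl(\sup_s\|\lambda_s\delta(m_s)\|\bigr)\,|\la u_s,f\ra|\to 0$ for every $f\in S$, whence $\gamma=0$. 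Making this boundedness rigorous — equivalently, ruling out that $\gamma$ is approached only through molecules whose base points escape the base point $0$ while the scalars diverge — is the heart of the matter, and uniform local flatness is what guarantees that no such nonzero limit can occur.
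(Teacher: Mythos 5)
Your overall architecture matches the paper's: split according to whether the point masses accumulate at a nonzero $\delta(m)$ or at $0$; in the first case recover a limit for the scalars and conclude; in the second case write $\lambda_s\delta(m_s)$ as (norm) times (normalized molecule) and use $S\subseteq\lip_0(M)$ to make the normalized molecules go to $0$ weak$^*$. Your first case is correct, and so is your verification that $u_s=\delta(m_s)/d(m_s,0)\to 0$ weak$^*$ (the two-regime splitting by the flatness threshold is exactly the right computation). But the proof is not complete: in the case $\delta(m_s)\to 0$ you need $\sup_s|\lambda_s|\,d(m_s,0)<\infty$, you say so explicitly, and you never establish it. The route you sketch --- metrizability of $\sigma(\F(M),S)$ on bounded sets --- is circular here: it only lets you replace the net by a sequence \emph{after} you know the net is norm-bounded, which is precisely the missing fact. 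A weak$^*$-convergent \emph{net} carries no norm bound at all, so as written the $\infty\cdot 0$ indeterminacy is genuinely unresolved, and this is the one step where uniform local flatness alone cannot save you.

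The paper closes this gap by testing with a \emph{sequence} $(\lambda_n\delta(x_n))$ from the outset. For a weak$^*$-convergent sequence the uniform boundedness principle gives $|\lambda_n|\,d(x_n,0)=\|\lambda_n\delta(x_n)\|\le C$, while weak$^*$ lower semicontinuity of the norm gives $|\lambda_n|\,d(x_n,0)\ge\|\gamma\|/2>0$ eventually; after passing to a subsequence one may assume $\lambda_n d(x_n,0)\to\ell\neq 0$ and $d(x_n,0)\to d$. If $d\neq 0$ then $\lambda_n\to\ell/d$ and one concludes as you do; if $d=0$ then $\lambda_n\delta(x_n)=\bigl(\lambda_n d(x_n,0)\bigr)\frac{\delta(x_n)-\delta(0)}{d(x_n,0)}\to\ell\cdot 0=0$ weak$^*$, contradicting $\gamma\neq 0$. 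So the single missing ingredient is the reduction to sequences plus Banach--Steinhaus. (Your insistence on nets is in principle the more scrupulous choice, since $\R\cdot\delta(B(0,r))$ is neither convex nor bounded and sequential weak$^*$ closedness is a priori weaker; but it is exactly what deprives you of the boundedness you need, and the paper simply argues along sequences.)
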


\begin{proof}
Let us fix $r \geq 0$. Let $\sequence{\lambda_n \delta(x_n)} \subset \R \cdot \delta(B(0,r))$ be a sequence converging to some $\gamma \in \F(M)$ in the  weak$^*$ topology. We assume that $\gamma \neq 0$, otherwise there is nothing to do. Since a weak$^*$ convergent sequence is bounded, and by weak$^*$ lower-semi-continuity of the norm, we may assume that there exists $C>0$ such that for every $n$:
$$ 0 < \frac{\|\gamma\|}{2} \leq |\lambda_n| \| \delta(x_n) \| = |\lambda_n| d(x_n,0) \leq C.$$
Thus, $d(x_n,0) \neq 0$ and $\lambda_n \neq 0$ for every $n$. Up to extracting a further subsequence, we may assume that the sequence $\sequence{\lambda_n d(x_n,0)}$ converges to some $\ell \neq 0$. Since $\sequence{x_n} \subset B(0,r)$, we also assume that $\sequence{d(x_n,0)}$ converges to some $d$. We will distinguish two cases.

If $d \neq 0$, then $\sequence{\lambda_n}$ converges to $\lambda:=\frac{\ell}{d}$ and so $\sequence{\delta(x_n)}$ weak$^*$ converges to $\frac{\gamma}{\lambda}$. Since $S$ is a natural predual of $\F(M)$, $\delta(B(0,r))$ is weak$^*$ closed in $\F(M)$. So there exists $x \in M$ such that $\gamma = \lambda \delta(x)$. 

If $d = 0$, then $\sequence{\delta(x_n)}$ converges to $0$ in the norm topology (and $\sequence{\lambda_n}$ tends to infinity). Note that we may write:
$$ \lambda_n \delta(x_n) =  \lambda_n d(x_n,0) \frac{\delta(x_n) - \delta(0)}{d(x_n,0)}. $$
Since $S\subset \lip_0(M)$, the sequence $\sequence{\frac{\delta(x_n) - \delta(0)}{d(x_n,0)}}$ weak$^*$ converges to 0. Moreover the sequence $\sequence{\lambda_n d(x_n,0)}$ converges to $\ell \neq 0$. Consequently $\sequence{\lambda_n \delta(x_n)}$ weak$^*$ converges to 0 and so $\gamma = 0$, which is a contradiction.
\end{proof}

Before going further, we need to introduce the injective tensor product of two Banach spaces. Recall that, to define the projective tensor product, we introduced $x\otimes y$ as an element of $\mathcal{B}(X \times Y)^{*}$. For the injective tensor product, we change the point of view since we now consider $x\otimes y$ as an element of $\mathcal{B}(X^* \times Y^*)$ defined as follows: 
$$\forall (x^*,y^*) \in X^* \times Y^*,  \quad \la x\otimes y , (x^*,y^*) \ra  = x^*(x) y^*(y).$$
In this case, we denote $\| \cdot \|_{\ep}$ the canonical norm on $\mathcal{B}(X^* \times Y^*)$. Thus, if $u = \sum_{i=1}^n x_i \otimes y_i \in X \otimes Y$ then 
$$\| u \|_{\ep} = \sup \Big \{ \Big| \sum_{i=1}^n  x^*(x_i) y^*(y_i) \Big|  \, : \, x^* \in B_{X^*}, y^* \in B_{Y^*} \Big \}.$$
The \textit{injective tensor product} of $X$ and $Y$ is defined by: 
$$X \iten Y = \overline{\mathrm{span}}^{\|\cdot\|_{\ep}} \{ x\otimes y \, : \, x \in X, \, y \in Y\} \subseteq \mathcal{B}(X^* \times Y^*) .$$
In the sequel, we will use a classical result from tensor product theory (see e.g. \cite[Theorem 5.33]{ryan}):
If $X^*$ or $Y^*$ has the Radon-Nikod\'ym property (RNP in short) and that $X^*$ or $Y^*$ has the AP, then $(X \iten Y)^* \equiv X^* \pten Y^*$. The RNP has many characterizations, we refer to reader to Section~VII.6 in \cite{vectormeasures} for a nice overview.  
\medskip

Assume now that there exists a subspace $S$ of $\Lip_0(M)$ such that $S^* \equiv \F(M)$. Then one has
$$ \mathcal F(M,X^*) = \mathcal F(M) \widehat\otimes_\pi X^* \equiv (S \iten X)^{*}$$
whenever either $\mathcal F(M)$ or $X^*$ has the AP and either $\mathcal F(M)$ or $X^*$ has the RNP. It is quite natural to wonder whether there are conditions which ensure that $S \iten X$ is a natural predual of $\F(M,X^*)$.
The next result asserts that this sometimes relies on the scalar case. 

\begin{proposition}
Let $M$ be a separable pointed metric space, $S\subset \lip_0(M)$ be a natural predual of $\F(M)$ and $X$ be a Banach space (with $\dim(X) \geq 2$).  Assume moreover that either $\mathcal F(M)$ or $X^*$ has the AP and either $\mathcal F(M)$ or $X^*$ has the RNP. Then $S \iten X$ is a natural predual of $\F(M,X^*)$.
\end{proposition}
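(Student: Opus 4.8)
The plan is to verify the two requirements in Definition~\ref{def:naturalvector}. The isometric identification $(S \iten X)^* \equiv \Free(M,X^*)$ is exactly the one established just before the statement (it is where the AP and RNP hypotheses are used), so the whole content lies in showing that, for every $r \geq 0$, the set $\delta(B(0,r),X^*)$ is $\sigma(\Free(M,X^*), S\iten X)$-closed. To this end I fix a net $(\delta(m_\alpha)\otimes x^*_\alpha)_\alpha$ in $\delta(B(0,r),X^*)$ that converges to some $z \in \Free(M,X^*)$ for $\sigma(\Free(M,X^*), S\iten X)$, and I aim to prove $z \in \delta(B(0,r),X^*)$. Since the elementary tensors $f \otimes x$ with $f \in S$ and $x \in X$ span a dense subspace of $S \iten X$, the convergence precisely says that $f(m_\alpha)\, x^*_\alpha(x) \to \la z, f\otimes x\ra$ for all such $f,x$.

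The crux is to show that $z$ is an elementary tensor. Here Proposition~\ref{propT} cannot be quoted directly: it only gives that $\mathcal T$ is \emph{weakly} closed, while $\sigma(\Free(M,X^*), S\iten X)$ is strictly coarser (equivalently, the determinant functionals $D_S$ are weakly but not weak$^*$-continuous). Instead, I note that for $f_1,f_2 \in S$ and $x_1,x_2 \in X$ the value $\la z, f_i\otimes x_j\ra$ is the limit of $f_i(m_\alpha)x^*_\alpha(x_j)$, and that for each fixed $\alpha$ the matrix $\big(f_i(m_\alpha)x^*_\alpha(x_j)\big)_{i,j}$ is rank one, hence has determinant $0$; letting $\alpha$ run shows that the bounded bilinear form $\Phi_z(f,x) := \la z, f\otimes x\ra$ on $S \times X$ has all its $2\times 2$ minors equal to $0$. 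A bilinear form with vanishing minors has rank at most one, so either $\Phi_z = 0$, in which case $z = 0$, or $\Phi_z(f,x) = \la \gamma, f\ra\, x^*(x)$ for some $\gamma \in S^* \equiv \Free(M)$ and $x^* \in X^*$; by density of $S \otimes X$ in $S \iten X$ the latter means $z = \gamma \otimes x^*$. (Alternatively one can extend the vanishing of the minors from $S \times X$ to $\Lip_0(M)\times X^{**}$, using that $S$ is weak$^*$-dense in $\Free(M)^* \equiv \Lip_0(M)$ and $X$ is weak$^*$-dense in $X^{**}$, and then invoke Lemma~\ref{lem:matrix}.)

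It then remains to locate $\gamma$. If $z = 0$ there is nothing to do, since $0 = \delta(0)\otimes x^* \in \delta(B(0,r),X^*)$. If $z = \gamma\otimes x^* \neq 0$, then $x^* \neq 0$, and choosing $x \in X$ with $x^*(x)\neq 0$ the identity above gives, for every $f \in S$,
$$ \Big\langle \tfrac{x^*_\alpha(x)}{x^*(x)}\,\delta(m_\alpha),\, f \Big\rangle = \tfrac{x^*_\alpha(x)}{x^*(x)}\, f(m_\alpha) \longrightarrow \la \gamma, f\ra . $$
Thus the net $\big(\tfrac{x^*_\alpha(x)}{x^*(x)}\delta(m_\alpha)\big)_\alpha \subset \R\cdot\delta(B(0,r))$ converges to $\gamma$ for $\sigma(\Free(M),S)$. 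By Lemma~\ref{lemma:weakstarclosed} the set $\R\cdot\delta(B(0,r))$ is weak$^*$-closed, so $\gamma = \lambda\,\delta(m)$ for some $\lambda \in \R$ and $m \in B(0,r)$, whence $z = \delta(m)\otimes \lambda x^* \in \delta(B(0,r),X^*)$.

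The main obstacle is the elementarity step: one must extract the rank-one structure of a weak$^*$-limit of elementary tensors, for which the weak closedness of $\mathcal T$ is insufficient and the minor computation (together with the density remark) is needed. Once $z$ is known to be elementary, the argument collapses, via scalarization, onto the already proved scalar statement of Lemma~\ref{lemma:weakstarclosed}.
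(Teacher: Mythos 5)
Your proposal is correct and follows essentially the same route as the paper: the paper likewise establishes that the set of elementary tensors is weak$^*$-closed in $\Free(M,X^*)$ by testing the $2\times 2$ minors against $f_1,f_2\in S$ and $x_1,x_2\in X$ (you merely spell out the ``not hard to show'' rank-one extraction that the paper leaves implicit), and then scalarizes exactly as you do to invoke Lemma~\ref{lemma:weakstarclosed}. The only cosmetic difference is that you treat the case $z=0$ explicitly, whereas the paper dismisses it with ``otherwise there is nothing to do.''
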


\begin{proof}
To show that $S \iten X$ is a natural predual, we essentially follow the proof of Proposition~\ref{prop:deltavectorclosed}. 
First of all, we show that $\mathcal T:= \{ \gamma \otimes x^* \, : \, \gamma \in \F(M), \, x \in X^* \}$ is weak$^*$ closed in $\F(M,X^*)$. Indeed, it is not hard to show that if $T \in \Free(M,X^*)$, then $T \in \mathcal T$ if and only if for every  linearly independent families $\{f_1,f_2 \} \subset S$ and $\{x_1,x_2\} \subset X$ we have: 
 $$\begin{vmatrix}
 \la  T , f_1 \otimes x_1 \ra  &  \la  T , f_1 \otimes x_2 \ra  \\ 
 \la  T , f_2 \otimes x_1 \ra  &  \la  T , f_2 \otimes x_2 \ra  
 \end{vmatrix} = 0 .$$
Accordingly, $\mathcal T$ is weak$^*$ closed. Now we fix $r>0$. Let us consider a net $(\delta(m_{\alpha})\otimes x_\alpha^*)_\alpha \subset \delta(B(0,r),X^*)$ which weak$^*$ converges to some $\gamma \otimes x^* \in \mathcal T$. We may assume that $x^* \neq 0$ otherwise there is nothing to do. Consider $x \in X$ such that $x^*(x)\neq 0$. Then, for every $f \in S$ we have that $f(m_\alpha)x^*(x_\alpha) \to f(\gamma) x^*(x)$. So the net $\big(\frac{x^*(x_\alpha)}{x^*(x)} \delta(m_\alpha)\big)_\alpha \subset \R \cdot \delta(M)$ weak$^*$ converges to $\gamma$. Since $\R \cdot \delta(M)$ is weak$^*$ closed (Lemma~\ref{lemma:weakstarclosed}), there is $\lambda \in \R$ and $m \in M$ such that $\gamma = \lambda \delta(m)$.
\end{proof}

\section*{Acknowledgments}

The author was partially supported by the French ANR project No. ANR-20-CE40-0006. 
He also thanks Christian Le Merdy and Abraham Rueda Zoca for useful discussions.

\end{document}